\DeclareFontFamily{OT1}{rsfs}{}
\DeclareFontShape{OT1}{rsfs}{m}{n}{ <-7> rsfs5 <7-10> rsfs7 <10->
rsfs10}{} \DeclareMathAlphabet{\mathscr}{OT1}{rsfs}{m}{n}
\newcommand{\mcM}{\mathscr M}
\newcommand{\eq}[1]{\eqref{#1}}
\newcommand{\bel}[1]{\begin{equation}\label{#1}}
\newcommand{\beal}[1]{\begin{eqnarray}\label{#1}}
\newcommand{\beadl}[1]{\begin{deqarr}\label{#1}}
\newcommand{\eeadl}[1]{\arrlabel{#1}\end{deqarr}}
\newcommand{\eeal}[1]{\label{#1}\end{eqnarray}}
\newcommand{\eead}[1]{\end{deqarr}}
\newcommand{\eea}{\end{eqnarray}}
\newcommand{\eeaa}{\end{eqnarray*}}
\newcommand{\be}{\begin{equation}}
\newcommand{\ee}{\end{equation}}
\DeclareFontFamily{OT1}{rsfs}{}
\DeclareFontShape{OT1}{rsfs}{m}{n}{ <-7> rsfs5 <7-10> rsfs7 <10->
rsfs10}{} \DeclareMathAlphabet{\mycal}{OT1}{rsfs}{m}{n}
\newcounter{mnotecount}[section]
\newcommand{\rmnote}[1]{}
\newcommand{\Ric}{\operatorname{Ric}}
\def\mysavedown#1{\edef\mysubs{\mysubs#1}}
\def\mysaveup#1{\edef\mysups{\mysups#1}}
\def\mydown#1{{\mytensor}_{\vphantom{\mysubs}#1}}
\def\myup#1{{\mytensor}^{\vphantom{\mysups}#1}}
\def\tensor#1#2{
  #1
  \def\mytensor{\vphantom{#1}}
  \def\mysubs{\relax}
  \def\mysups{\relax}
  \let\down=\mysavedown
  \let\up=\mysaveup
  #2
  \let\down=\mydown
  \let\up=\myup
  #2
  }
\renewcommand{\sharp}{\#}
\newcommand{\Hess}{\operatorname{Hess}}
\newcommand{\Tr}{\operatorname{Tr}}
\newcommand{\II}{\mathbb I}
\newcommand{\A}{\mathbb A}
\newcommand{\T}{\mathbb T}
\newcommand{\R}{\mathbb R}
\renewcommand{\setminus}{\smallsetminus}
\renewcommand{\epsilon}{\varepsilon}
\renewcommand{\hat}{\widehat}
\def\crn#1#2{{\vcenter{\vbox{
        \hbox{\kern#2pt \vrule width.#2pt height#1pt
           }
          \hrule height.#2pt}}}}
\newcommand{\newF}{\lambda}
\newcommand{\grad}{\operatorname{grad}}
\renewcommand{\hbar}{{\overline h}}
\newcommand{\pre}[2]{{{\vphantom{#2}}^{#1}}\kern-.2ex{#2}}
\theoremstyle{plain}
\newtheorem{theorem}{\sc Theorem}[section]
\newtheorem{lemma}[theorem] {\sc Lemma}
\theoremstyle{definition}
\newtheorem{Remark}[theorem]{\sc  Remark\rm}
\newtheorem{remark}[theorem]{\sc  Remark\rm}
\numberwithin{equation}{section}
\date{January 31, 2011}
\begin{document}

\title[Unique continuation for stationary vacuum
space-times]{Unique continuation and extensions of Killing
vectors at boundaries for stationary vacuum space-times}
\author[P.T. Chru\'sciel]{Piotr T.~Chru\'sciel} \address{Piotr
T.~Chru\'sciel, University of Vienna, Austria}
\email{piotr.chrusciel@lmpt.univ-tours.fr} \urladdr{
http://www.phys.univ-tours.fr$/\sim${piotr}}
\author[E. Delay]{Erwann Delay} \address{Erwann Delay,
Laboratoire d'analyse non lin\'eaire et g\'eom\'etrie,
Facult\'e des Sciences, 33 rue Louis Pasteur, F84000 Avignon,
France} \email{Erwann.Delay@univ-avignon.fr}
\urladdr{http://math.univ-avignon.fr}
\begin{abstract}
Generalizing Riemannian theorems of Anderson-Herzlich and
Biquard, we show that two $(n+1)$-dimensional stationary vacuum
space-times (possibly with cosmological constant $\Lambda \in
\R$) that coincide up to order one along a timelike
hypersurface $\mycal T$ are isometric in a neighbourhood of
$\mycal T$. We further prove that KIDS of $\partial M$ extend
to Killing vectors near $\partial M$. In the AdS type setting, we show
unique continuation near conformal infinity if the metrics have
the same conformal infinity and the same undetermined term.
Extension near $\partial M$ of conformal Killing vectors of conformal
infinity which leave the undetermined Fefferman-Graham term
invariant is also established.
\end{abstract}

\maketitle

\tableofcontents

\section{Introduction}\label{section:intro}

Unique continuation theorems for initial data sets with Killing
Initial Data (KIDs) are of current interest (see,
e.g.,~\cite{IonescuKlainerman1,AIK2} and references therein).
Such theorems are relevant for uniqueness theorems for
stationary solutions~\cite{AIK,ACD2}. In a recent paper
\cite{BiquardContinuation} (see
also~\cite{AndersonHerzlichErratum}) unique continuation
theorems for the Riemannian Einstein equations have been
established. The aim of this work is to point out that
Biquard's arguments~\cite{BiquardContinuation} generalize in a
rather straightforward manner to initial data sets with a
timelike KID. More precisely, we prove that the stationary
Einstein equations, viewed as equations satisfied by the
initial data on a space-like surface, have the unique
continuation property at timelike hypersurfaces. While this is
hardly surprising, since stationary solutions are analytic in
the interior in an appropriate atlas~\cite{MzH}, it should be
borne in mind that the last reference does not apply to
solutions with boundary, as considered here. In particular the
solution need not to be analytic up to the boundary. See
Section~\ref{s18VI0.1} for further comments on this issue.

The precise result is as follows (compare
\cite[Theorem~4]{BiquardContinuation}). Consider  a space-time
$\mcM=\R\times M$, where $M$ is an $n$-dimensional manifold
with smooth compact
boundary $\partial M$; we denote by $t$ the coordinate along
the $\R$ factor. Let $g_-$ be a smooth Lorentzian  metric on
$\mcM$, with Killing vector $X=\partial/\partial t$. In adapted
coordinates such a metric can be written as
\beal{gme1}
 \phantom{xxx} g_{-} =
-V^2(dt+\underbrace{\theta_i dx^i}_{=\theta})^2 +
\underbrace{g_{ij}dx^i dx^j}_{=g_+}\;, &
 \mbox{with} &
 \partial_t V = \partial_t \theta = \partial_t g_+=0
 \;.
\eea
We assume that the metric is Einstein,
\bel{EE} \Ric(g_{-})=
 \frac 2{n-1}\Lambda
 g_{-}
 \;,
\ee
where $\Lambda$ is a constant. In Section~\ref{sec:findist} we
prove:

\begin{theorem}
 \label{maintheorem3}
Let $n=\dim M\ge 2$,  and consider two $C^\infty$  stationary
Lorentzian Einstein metric of the form \eq{gme1}, with strictly
positive $V$ near $\partial M$, inducing the same metric on
$\partial M$. If the second fundamental forms of $\R\times
\partial M$ coincide,
the metrics are pull-backs of each other near $\R\times\partial
M$.
\end{theorem}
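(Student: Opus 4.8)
The plan is to push everything down to the Riemannian orbit space $M=\mcM/\R$ and there invoke Biquard's theorem. Because both metrics are stationary with the same Killing field $X=\del_t$, and because $V>0$ makes $g_+$ Riemannian, all of the geometry is carried by the triple $(g_+,V,\theta)$ on $M$, and the Einstein condition \eq{EE} becomes, by the standard Kaluza--Klein reduction of a metric of the form \eq{gme1}, a coupled second-order system on $M$: a scalar equation $\Delta_{g_+}V=F_1(V,\theta,\del\theta,\Lambda)$ for the length $V$ of $X$; a second-order ``twist'' equation, schematically $\div_{g_+}\!\big(V^3\,d\theta\big)=F_2$, for the one-form $\theta$; and a tensor equation expressing $\Ric(g_+)$ through $V$, $\theta$ and their first derivatives together with $\Lambda$. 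I would first record this system and check that, after the gauge choices below, its principal part is (componentwise) of Laplace type, so that it lies in the class of geometric elliptic systems covered by \cite[Theorem~4]{BiquardContinuation}.

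The gauge fixing splits into two independent pieces. For $g_+$ I would put one metric in boundary normal coordinates, $g_+=dr^2+h_r$ with $r$ the $g_+$-distance to $\dm$, and then bring the second metric into the harmonic (DeTurck) gauge relative to the first, so that the $\Ric(g_+)$-equation acquires a diagonal, Laplace-type symbol. For $\theta$ I would exploit the residual freedom $t\mapsto t+f(x)$, under which $\theta\mapsto\theta-df$, to impose the Lorenz-type normalization $\theta(\del_r)=0$ near $\dm$, which renders the twist equation elliptic in the tangential components. The key requirement is that every diffeomorphism used here can be chosen to equal the identity on $\dm$ and to agree with it to first order along $\dm$, so that equality of Cauchy data is undisturbed by the passage to this gauge.

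I would then translate the hypotheses into equality of Cauchy data on $\dm$ for the fields $(g_+,V,\theta)$. In the gauge above,
\bel{indmet}
g_{-}\big|_{\R\times\dm}=\Big(-V^2\,(dt+\theta_A\,dx^A)^2+(h_r)_{AB}\,dx^A dx^B\Big)\Big|_{r=0},
\ee
so equality of the induced metrics forces $V$, the tangential part of $\theta$, and the induced metric of $\dm$ to coincide at $r=0$; since $\del_r$ is the common unit normal to $\R\times\dm$, equality of the second fundamental forms then forces the first normal derivatives $\del_rV$, $\del_r\theta_A$ and $\del_r h_{AB}$ to coincide at $r=0$ as well. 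Combined with the normalizations $\theta(\del_r)=0$ and $g_+=dr^2+h_r$, this yields that the two solutions of the reduced system share identical zeroth- and first-order data along $\dm$.

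With a Laplace-type elliptic system and coinciding Cauchy data in hand, the difference of the two solutions satisfies a linear homogeneous system whose principal part is the componentwise Laplacian, and Biquard's unique continuation theorem---resting on Aronszajn--Cordes-type Carleman estimates---gives that this difference vanishes in a neighbourhood of $\dm$. Undoing the gauge diffeomorphisms then displays the two Lorentzian metrics as pull-backs of one another near $\R\times\dm$, which is the assertion. I expect the main obstacle to lie in the boundary gauge analysis: one must verify that the coupling among $V$, $\theta$ and $g_+$ does not destroy the Laplace-type principal symbol after gauge fixing, and that the boundary-preserving diffeomorphisms used to reach the gauge genuinely carry the single geometric Cauchy datum of $g_-$ on $\R\times\dm$ into matching Cauchy data for all three reduced fields simultaneously.
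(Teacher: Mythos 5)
Your overall strategy---reduce to the triple $(g_+,V,\theta)$ on the quotient, gauge-fix, translate the hypotheses into matching Cauchy data, and conclude by unique continuation---is the same skeleton as the paper's, but your central mechanism (DeTurck/harmonic gauge relative to the first metric to manufacture a genuinely elliptic system, then classical Cauchy-data uniqueness) is not what the paper does, and it breaks exactly where you yourself flag ``the main obstacle''. To put the second metric in harmonic gauge relative to the first \emph{while preserving the Cauchy data}, you need a harmonic-map-type diffeomorphism equal to the identity to first order along $\partial M$; but the gauge equation is an elliptic boundary value problem for which prescribing both $\Phi|_{\partial M}=\mathrm{id}$ and its normal derivative is overdetermined, and with Dirichlet data alone nothing guarantees $d\Phi=\mathrm{id}$ along $\partial M$, so the pull-back can destroy precisely the equality of second fundamental forms you start from. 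This is the known pitfall behind the erratum \cite{AndersonHerzlichErratum} to Anderson--Herzlich, and the reason Biquard's method \cite{BiquardContinuation}, which the paper follows, avoids harmonic gauge entirely: each metric is put in its own Gauss gauge $g_+=dx^2+g(x)$ with $\theta$ purely tangential (your $t\mapsto t+f$ normalization is the same), the Einstein equations in this gauge determine all higher normal derivatives from the boundary data, so the two solutions agree to \emph{infinite} order; unique continuation is then obtained from Carleman estimates with weights $x^{-2s}$ (cf.\ \eqref{contIIgx}) for a system that is \emph{not} elliptic: elliptic-type equations for the differentiated fields $(\II,\xi',V')$, packaged into $\A$, coupled to the transport relations \eqref{diffII} and \eqref{mainequationrr3} for $g-g_0$ and the mean curvature $H_-$.

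There is a second concrete error: the claim that the radial normalization $\theta(\del_r)=0$ ``renders the twist equation elliptic''. The twist equation \eqref{mainequation+ti} is, up to conventions, $\delta_{g_+}(V^3\,d\theta)=0$; as a second-order operator on $\theta$ its principal symbol is $|\zeta|^2\mathrm{Id}-\zeta\otimes\zeta$, which degenerates whenever $\theta$ is parallel to $\zeta$, and killing the $dr$-component does nothing about the tangential degeneracy---one would need a Lorenz-type condition on $\theta$ itself, which you have not imposed and which competes with the $t$-shift freedom you already spent. The paper's fix is structural: it works with $\xi'$ rather than $\theta$, takes the $r$-derivative of \eqref{mainequationdivA} and inserts the constraint \eqref{mainequationdivr}, $d^*(V^3\xi')=0$, which completes $\delta d$ to the Hodge Laplacian and yields the elliptic equation \eqref{ellipticxi'}; the same differentiation trick produces the elliptic equations for $\II$ and $V'$. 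So even granting your gauge, the system for $(g_+,V,\theta)$ is not of Laplace type and the appeal to Aronszajn--Cordes does not apply as stated. Your Cauchy-data bookkeeping in the Gauss/radial gauge is correct and matches the paper's, but the two load-bearing steps---the boundary-preserving DeTurck gauge and the ellipticity of the twist equation---are genuine gaps, and the paper's proof is constructed specifically to circumvent both.
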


An infinitesimal version of Theorem~\ref{maintheorem3} is also
valid (compare~\cite[Theorem 2]{BiquardContinuation}):

\begin{theorem}
 \label{maintheorlinear}
Let $h$ be a smooth $t$--independent solution of the
linearization of the equation \eq{EE} at a stationary solution
as \eq{gme1}, defined near the boundary, and smooth up to the
boundary. Assume that $h$ has no $dx$ components%
\footnote{More precisely, $h$ as neither $dxdy$ nor $dydx$
components whatever $y=t,x$, or any coordinate on $\partial
M$.}
in a Gauss coordinate system near the boundary, where $x$ is
the distance to the boundary on $M$, and that $h=o(x^2)$.  Then
$h\equiv0$ near $\partial M$.
\end{theorem}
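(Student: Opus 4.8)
The plan is to use stationarity to convert the statement into an \emph{elliptic} unique continuation problem, and then to run Biquard's two--step Carleman argument on it.

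First I would exploit that $h$ is $t$--independent. Since the background $g_-$ of the form \eqref{gme1} is stationary and $V>0$ near $\partial M$, the orbit space carries a Riemannian quotient metric, and the d'Alembertian of $g_-$, when applied to a $t$--independent tensor, loses its $\partial_t^2$--contribution and becomes a genuinely elliptic operator on $M$. Concretely, $D\Ric_{g_-}(h)=\lambda h$ with $\lambda=\tfrac{2\Lambda}{n-1}$ (from \eqref{EE}) is the linearisation of the reduced stationary field equations for $(V,\theta,g_+)$ that already underlie Theorem~\ref{maintheorem3}; this reduced system is a second order elliptic system on $(M,g_+)$, and it is this ellipticity --- absent for the genuinely Lorentzian operator --- that makes Carleman estimates available. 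Taking $x$ to be the $g_+$--distance to $\partial M$, the hypothesis that $h$ has no $dx$ components is precisely the geodesic (transverse) gauge $h(\partial_x,\cdot)=0$.

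Next, following Biquard, I would control the Bianchi quantity $W:=\beta_{g_-}h$, where $\beta_g h=\div_g h+\half\,d\,\Tr_g h$. Feeding $D\Ric_{g_-}(h)-\lambda h=0$ into the twice--contracted second Bianchi identity, linearised at the Einstein background $\Ric(g_-)=\lambda g_-$, yields a homogeneous second order equation that is \emph{closed in $W$} (the tensor $h$ enters only through $W$), and whose principal part, after the $t$--independent reduction above, is the Riemannian Laplacian of the quotient. I would then check that the geodesic gauge together with $h=o(x^2)$ gives $W$ vanishing Cauchy data on $\partial M$; a boundary Carleman (Aronszajn--Cordes) estimate for this elliptic equation then forces $W\equiv0$ near $\partial M$. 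In other words, the geodesic gauge is in fact the Bianchi gauge.

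With $W=0$ the linearised Einstein equation collapses to the elliptic Lichnerowicz form $\half\,\DeltaL h=\lambda h$, whose principal symbol is the scalar Laplacian acting diagonally on the components of $h$. A second application of the Carleman estimate, using once more $h=o(x^2)$, then yields $h\equiv0$ in a neighbourhood of $\partial M$. The main obstacle is the decay bookkeeping linking the two steps: for smooth $h$, the condition $h=o(x^2)$ forces $h$ to vanish to third order at $\partial M$, hence $W=\beta_{g_-}h$, being first order in $h$, vanishes to second order --- which is exactly the zero Cauchy data needed to start the $W$--step, and explains why $o(x^2)$ (rather than $o(x)$) is the correct hypothesis. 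Verifying that the transport (constraint) components of $D\Ric_{g_-}(h)-\lambda h=0$ in geodesic gauge are compatible with this, so that both Carleman steps genuinely apply, is the delicate part; the Carleman estimates themselves are standard once the reduced operators are elliptic with smooth coefficients and diagonal Laplacian principal part.
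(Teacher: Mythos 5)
Your first step---using $t$--independence to reduce the linearized Lorentzian equation to an elliptic system on $M$---is correct and is indeed how the paper sets things up. But the core of your argument, the claim that feeding $D\Ric_{g_-}(h)=\lambda h$ into the linearized contracted Bianchi identity ``yields a homogeneous second order equation that is closed in $W=\beta_{g_-}h$,'' fails. At an Einstein background this procedure yields the \emph{trivial} identity. Indeed, writing $D\Ric_g(h)=\tfrac12\Delta_L h-\delta^*\beta_g h$ and using the Einstein-manifold commutation $\beta_g\Delta_L=(\nabla^*\nabla+\lambda)\beta_g$ together with $\beta_g\delta^*\omega=\tfrac12(\nabla^*\nabla-\lambda)\omega$, one finds
\begin{equation*}
\beta_g\bigl(D\Ric_g(h)-\lambda h\bigr)
=\tfrac12(\nabla^*\nabla+\lambda)\beta_g h-\tfrac12(\nabla^*\nabla-\lambda)\beta_g h-\lambda\beta_g h=0
\end{equation*}
identically, for \emph{every} symmetric $h$. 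This is forced by gauge invariance: $h=\delta^*\omega$ solves the linearized equation for arbitrary $\omega$, and its Bianchi vector $W=\tfrac12(\nabla^*\nabla-\lambda)\omega$ sweeps out the range of an elliptic operator, so no nontrivial closed homogeneous PDE for $W$ can be a consequence of the linearized equation alone. For the same reason your slogan ``the geodesic gauge is in fact the Bianchi gauge'' is false: a purely tangential $h$ in Gauss coordinates has no reason to satisfy $\beta_{g_-}h=0$, and since $W\equiv0$ is not established, the final collapse to $\tfrac12\Delta_L h=\lambda h$ and the second Carleman step never get off the ground. This Bianchi-gauge route at a boundary is precisely the pitfall behind the erratum of Anderson--Herzlich \cite{AndersonHerzlichErratum}, which is why Biquard \cite{BiquardContinuation}, and this paper following him, proceed differently.

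The paper's actual mechanism uses the geodesic gauge \emph{dynamically} rather than through the Bianchi vector: one differentiates the reduced stationary equations in the Gauss-coordinate direction to obtain a system, elliptic in all variables, for the shape-operator-type quantity $\A$ built from $(\II,\xi',V')$ --- the linearized analogues of \eq{ellipticII}, \eq{ellipticxi'}, \eq{ellipticV'} --- in which the only non-elliptic term is $-\delta^*dH_-$; the mean curvature $H_-$ is then controlled through the linearized constraint (Riccati) equation \eq{mainequationrr3}, and $h$ itself through its $x$-derivative as in \eq{diffII}. The conclusion follows from weighted Carleman-type integral inequalities of the form \eq{contIIgx} applied to this coupled system, with $h=o(x^2)$ (hence $h=O(x^3)$ by smoothness, so $\A$-data vanishes to second order) providing the starting decay. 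Your decay bookkeeping in the last paragraph is fine, but it feeds into a $W$-equation that does not exist; to repair the proof you would have to replace the Bianchi-vector step by this evolution-of-the-second-fundamental-form reduction, i.e., essentially reconstruct Sections~\ref{sec:ads}--\ref{sec:findist} in linearized form.
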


In a manner  analogous to the Lorentzian
case~\cite{ChBeigKIDs},  couples $(\alpha,\nu)$ on $\partial M$
that satisfy equations \eqref{kids0}-\eqref{kids1} below (which
would be satisfied at $\partial M$ by normal and tangential
parts of a Killing form on $M$) will be called \emph{Killing
Initial Data} (KIDs) on $\partial M$. In the special case
$\alpha=0$, this reduces to the condition that $\nu^\sharp$
 is
a Killing vector of the metric induced on $\partial M$, the
flow of which leaves the extrinsic curvature of $\partial M$
invariant. As a Corollary of Theorem~\ref{maintheorlinear}, in
Section~\ref{sub8VI0.1} we prove:

\begin{theorem}
 \label{maintheorem4}
Let $n=\dim M\ge 2$,  and let $g_-$ by a  $C^\infty$ stationary
Lorentzian Einstein metric of the form \eq{gme1}, with strictly
positive $V$ near $\partial M$. Then any time-independent KID
on $\R\times\partial M$ arises from the restriction to
$\R\times\partial M$ of a time-independent Killing vector
defined on a neighbourhood of $\R\times\partial M$.
\end{theorem}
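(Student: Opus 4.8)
The plan is to deduce Theorem~\ref{maintheorem4} from the infinitesimal uniqueness result Theorem~\ref{maintheorlinear}, following the standard "KIDs extend to Killing vectors" strategy in which one first produces a candidate Killing field by solving a suitable system in the interior and then shows the candidate is genuinely Killing by exploiting that the obstruction tensor satisfies a linearized Einstein-type equation with vanishing Cauchy data.

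Let me think about this carefully. A KID on $\R\times\partial M$ is a pair $(\alpha,\nu)$ on $\partial M$ satisfying the prescribed constraint equations (\eqref{kids0}–\eqref{kids1}); in the special case $\alpha=0$ it is a Killing vector $\nu^\sharp$ of the boundary metric whose flow preserves the second fundamental form of $\partial M$. The data $(\alpha,\nu)$ are precisely the normal and tangential components, along $\R\times\partial M$, that a Killing form of $g_-$ would have. So the first step is to write down, in a Gauss coordinate system near the boundary with $x$ the $g_+$-distance to $\partial M$, the Cauchy data on $\{x=0\}$ for a prospective Killing vector $Y$ of $g_-$: the value of $Y$ and of $\nabla Y$ on the boundary are determined algebraically by $(\alpha,\nu)$ together with the KID equations. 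Because everything is to be time-independent, I work with the $t$-independent ansatz throughout, so $Y$ is generated by the initial data on $\partial M$ and the stationary Einstein structure.

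Second, I would build an honest candidate extension. The natural device is to solve the Killing equation's integrability system as an evolution problem off the boundary. Since $g_-$ is smooth (not necessarily analytic up to $\partial M$), one cannot invoke Cauchy–Kovalevskaya or interior analyticity directly; instead I would set $h := \mathcal L_Y g_-$ for the candidate $Y$ and observe that, for a Killing field of an Einstein metric, $h$ would solve the linearized Einstein equation, and that the prescribed KID initial data force $h$ and its normal derivative to vanish on $\partial M$ to the order required by Theorem~\ref{maintheorlinear}. Concretely, the KID conditions are exactly the statement that $h$ has no $dx$ components in the Gauss system and that $h=o(x^2)$ at the boundary. Then Theorem~\ref{maintheorlinear} applies and yields $h\equiv 0$ near $\partial M$, i.e. $Y$ is a genuine Killing vector of $g_-$.

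The main obstacle is producing the candidate vector field $Y$ in the first place, and verifying that $h=\mathcal L_Y g_-$ really satisfies the hypotheses of Theorem~\ref{maintheorlinear} — namely that $h$ is a $t$-independent solution of the linearized Einstein equation with no $dx$ components and with $h=o(x^2)$. Getting $Y$ requires propagating the boundary data off $\partial M$; the clean way is to transport $(\alpha,\nu)$ by the geodesics normal to $\partial M$ and then correct the result so that the Killing-equation obstruction $h$ inherits vanishing Cauchy data. The delicate bookkeeping is showing that the KID equations \eqref{kids0}–\eqref{kids1}, which encode precisely the first-order compatibility of a Killing field at the boundary, translate into the vanishing of both $h$ and $\partial_x h$ at $x=0$ together with the structural absence of $dx$ components; once this is in place the result is immediate. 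Finally, since $Y$ and $g_-$ are $t$-independent and $X=\partial_t$ is already Killing, the extension is time-independent as required, completing the proof.
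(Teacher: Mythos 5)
Your proposal follows the paper's proof essentially verbatim: extend $\alpha$ constantly along the normal geodesics and solve the ODE $(\nu^\sharp)'=-\grad_g\alpha$ so that $h=\mathcal L_{\omega^\sharp}g$ is purely tangential, observe that $h$ lies in the kernel of the linearized Einstein operator (being a Lie derivative of an Einstein metric) while the KID equations \eqref{kids0}--\eqref{kids1} force the vanishing of its Cauchy data, and then invoke the unique continuation Theorem~\ref{maintheorlinear}, with $t$-independence guaranteed by the stationary ansatz. The only cosmetic slip is that the absence of $dx$ components is not itself encoded by the KID equations but is arranged by the choice of extension ($\alpha'=0$ kills $h_{xx}$ and the ODE kills $h_{xA}$), with the KID equations then giving $h(0)=h'(0)=0$ for the tangential part --- exactly as in the paper.
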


There exist topological obstructions for global extensions,
compare Remark~\ref{R16VI0.1} below.

In the same vein, a  unique continuation result is established
for a stationary  metric $g_-$, satisfying the vacuum Einstein
equations with a negative cosmological constant and admitting a
$C^2$ conformal completion at infinity, with a smooth conformal
class at the conformal boundary. From~\cite[Theorem
7.1]{ChDelayStationary} the metric  $g_-$ is then
polyhomogeneous.

Then,  using a suitable coordinate  $\rho$  near $\partial M$
that vanishes at $\partial M$, the metric takes the form
$$
g_{-}=\rho^{-2}(d\rho^2+G(\rho)),
$$
where $G(\rho)$ is a family of Lorentzian stationary  metrics
on $\R\times\partial M$, which are Einstein and thus admit the
Fefferman-Graham expansions~\cite{FG,GrahamHirachi}%
\footnote{The original references assume a Riemannian signature
at the boundary, but the expansions are independent of this.}
:
$$
G(\rho)=
\left\{\begin{array}{ll}
G_0+G_2\rho^2+...+G_n\rho^n+...& n \mbox{ odd },\\
G_0+G_2\rho^2+...+{\mathcal O} \rho^n\ln\rho+G_n\rho^n+...& n \mbox{ even },
\end{array}\right.
$$
where all the coefficients $G_i$ are determined by  $G_0$, the
conformal infinity and the undetermined term $G_n$. In
particular if two metrics as above have the same $G_0$ and
$G_n$ in a common coordinate system, then
 they coincide to infinite
order.
%

In the conformally compact setting our  result, proved in
Section~\ref{sec:ads}, reads:

\begin{theorem}\label{maintheorem1}
Let $n=\dim M\ge 2$,  and consider two $C^2$ compactifiable
stationary  Lorentzian Einstein metrics, with negative
cosmological constant, of the form \eq{gme1} which define the
same class $[(G_0,G_n)]$. Then the
metrics are diffeomorphic near infinity.
\end{theorem}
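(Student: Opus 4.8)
The plan is to run the same scheme as for Theorem~\ref{maintheorem3}, with the finite timelike hypersurface $\R\times\partial M$ replaced by the conformal boundary at infinity, and with the order-one matching of the two metrics replaced by infinite-order matching there. Since the two metrics define the same class $[(G_0,G_n)]$, I would begin with a gauge normalization: the equivalence of the classes provides a diffeomorphism of a one-sided neighbourhood of infinity which, composed with the geodesic normal form $g_-=\rho^{-2}(d\rho^2+G(\rho))$, brings both metrics to a common defining function $\rho$ with identical conformal representative $G_0$ and identical undetermined term $G_n$. The Fefferman--Graham expansion recalled above then forces all the coefficients $G_i$ to agree for the two metrics, so that, after this normalization, the two space-times coincide to infinite order at $\rho=0$.

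Next I would reduce to a linear problem, as in Theorem~\ref{maintheorlinear}. Quotienting by the timelike Killing field $X=\partial/\partial t$ turns the stationary Einstein condition \eq{EE} into a coupled second-order system for the triple $(V,\theta,g_+)$ on $M$, which (after the usual gauge fixing) is elliptic; this is precisely what makes the Riemannian unique-continuation technology of~\cite{BiquardContinuation} applicable despite the Lorentzian signature. Writing $h$ for the difference of the two sets of reduced data in the common gauge, the Einstein equations give, to leading order, $\DeltaL h+(\text{lower-order terms})=0$, while the previous step yields $h=O(\rho^\infty)$. The assertion to prove, $h\equiv0$ near infinity, is then an infinite-order unique continuation at conformal infinity for this system.

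The core of the argument is a Carleman estimate tailored to the conformally compact geometry near $\rho=0$, analogous to the one behind the Riemannian result of~\cite{BiquardContinuation} and to the estimate underlying Theorem~\ref{maintheorem3}, but now with a weight that compensates the $\rho^{-2}$ degeneration of $g_-$ as $\rho\to0$. Working with the conformally rescaled (asymptotically hyperbolic) operator, and typically after the change of variable $s=-\log\rho$ that turns infinity into a cylindrical end, I would use a weight $e^{\tau\varphi}$ with $\varphi$ chosen so that the pseudoconvexity conditions hold for all large $\tau$. The indicial roots of the relevant operator are tied to the threshold order $n$ of the undetermined term; because $h$ decays faster than every power of $\rho$, all boundary contributions in the Carleman inequality vanish, and letting $\tau\to\infty$ forces $h\equiv0$ in a neighbourhood of $\rho=0$. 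Undoing the reduction and the gauge shows that the two original metrics coincide there, i.e.\ they are diffeomorphic near infinity.

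I expect the principal obstacle to be the Carleman estimate in this degenerate setting: because the physical metric blows up at the conformal boundary, the estimate must be set up for the rescaled operator, and one has to verify both the pseudoconvexity of the level sets of $\varphi$ and the correct matching of $\tau$, the weight, and the indicial behaviour at $\rho=0$, so that the threshold sits exactly at the order $n$ of $G_n$. A secondary difficulty, already present in Theorem~\ref{maintheorem3}, is to keep the coupled lapse--twist--metric system triangular enough at top order for a scalar Carleman estimate to propagate simultaneously to all components of $h$; here the stationarity and the precise structure of the reduced equations are what must be exploited.
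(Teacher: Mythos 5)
Your opening step matches the paper exactly: the equivalence of the classes $[(G_0,G_n)]$ together with the Fefferman--Graham expansion puts both metrics in a common normal form in which they coincide to infinite order at $\rho=0$. From there, however, your plan has a genuine gap. You assert that after ``the usual gauge fixing'' the difference of the reduced data satisfies an equation of the form $\Delta_L h+(\text{lower order})=0$, and you then defer the entire analytic content to a bespoke Carleman estimate with a pseudoconvex weight whose threshold matches the indicial root at order $n$ --- a step you yourself flag as the ``principal obstacle'' and do not carry out. That deferred step is precisely where this problem is delicate (it is where the original Anderson--Herzlich argument needed the erratum~\cite{AndersonHerzlichErratum}), and your intermediate claim is wrong as stated: in the geodesic normal form $g_-=dr^2+G$ --- the only gauge directly compatible with your infinite-order matching --- the tangential Einstein equations \eq{mainequationAB} form an $r$-evolution (Riccati-type) system whose principal part is degenerate because of diffeomorphism invariance, so the metric difference does \emph{not} satisfy an elliptic Lichnerowicz-type equation; if instead you impose an elliptic (harmonic/DeTurck) gauge, you must separately show that the gauge diffeomorphism itself agrees to infinite order at the conformal boundary, which your proposal never addresses.

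The paper's proof avoids both difficulties by Biquard's reformulation, extended to the stationary setting, and needs no new pseudoconvexity analysis at all. One differentiates the evolution equations in $r=-\ln\rho$ and substitutes the constraint equations \eq{mainequationAr2}, \eq{mainequationdivr}, \eq{mainequation+tt}, obtaining genuinely elliptic equations \eq{ellipticII}, \eq{ellipticxi'}, \eq{ellipticV'} --- not for the metric difference, but for the differentiated data $(\II,\xi',V')$, packaged as the single field $\A=VV'\,dt^2+\tfrac12V^2(\xi'\otimes dt+dt\otimes\xi')+\II$ measured in ${\mathcal G}=V^2dt^2+g$; the trace $H_-=\Tr_{\mathcal G}\A$, which is not an independent field, is handled through the Riccati identity $H_-'=n-|\A|^2$ of \eq{mainequationrr3}. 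The nonlinear difference of the two solutions (note: the theorem is not a linearized statement, so one works with differences and controlled error terms, not with an actual linearization) is then dominated via elementary exponential-weight integral inequalities \eq{contIIg}--\eq{contAg}, together with Lemma~\ref{estiA} to compare $|\A|^2_{\mathcal G}$ with $|\A_0|^2_{{\mathcal G}_0}$, after which Section~3 of~\cite{BiquardContinuation} applies verbatim with $\II$ there replaced by $\A$ and $g$ there by $dr^2+{\mathcal G}$. The genuinely new work in the stationary case is exactly the identification of $\A$ and the estimates on the extra lapse and twist terms --- the point you gesture at with ``keep the coupled lapse--twist--metric system triangular'' but do not supply.
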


\begin{Remark}
 \label{R12VI0.1}
Theorem~\ref{maintheorem1} has also its infinitesimal version
similar to Theorem~2 in~\cite{BiquardContinuation}, which we do
not spell out in detail here.
\end{Remark}

Note that large families of stationary Lorentzian Einstein
metrics as above have been constructed in
\cite{ChDelayStationary}.

Consider, next, the associated problem of conformal isometries
extension from a conformal boundary at infinity.
Such boundary maps naturally decouple into conformal isometries
of the boundary which can be made into isometries by an
appropriate choice of the conformal factor, and those that
cannot. Here we only consider the former, and in
Section~\ref{sub8VI0.1} we prove (compare
\cite{WangConformal,ChDelayStationary,AndersonAdvMath2003} for
results under different conditions):

\begin{theorem}\label{maintheorem5a}
Let $n=\dim M\ge 2$,  and let $g$ be $C^2$ Lorentzian Einstein
metric of the form \eq{gme1} on $\R \times M $, with negative
cosmological constant,  and with a smooth conformal boundary at
infinity. Let $X$ be a conformal Killing vector of the
conformal boundary which is a Killing vector for some choice of
the conformal factor at the boundary. Then $X$ extends to a
Killing vector field defined near conformal infinity if and
only if $X$ leaves the associated undetermined term invariant.
\end{theorem}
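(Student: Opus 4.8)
The plan is to reduce the statement to the infinitesimal unique continuation result underlying Theorem~\ref{maintheorem1} (its linearized version mentioned in Remark~\ref{R12VI0.1}), exploiting the elementary but crucial fact that, since the Einstein operator is diffeomorphism-covariant, the Lie derivative $\mathcal{L}_Y g$ of an Einstein metric $g$ along \emph{any} vector field $Y$ is automatically a solution of the linearization of \eqref{EE} at $g$. Thus proving that $X$ extends to a Killing field amounts to producing an extension $Y$ whose Lie derivative $h:=\mathcal{L}_Y g$ can be shown to vanish identically by unique continuation.

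The necessity (``only if'') direction is immediate: if $X$ is the restriction to conformal infinity of a Killing vector $Y$ defined near infinity, then the flow of $Y$ is an isometry, hence preserves the full asymptotic expansion of $g$, and in particular preserves the conformal class of $G_0$ and every Fefferman-Graham coefficient, including the undetermined term $G_n$. So $X$ leaves the associated undetermined term invariant.

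For the sufficiency (``if'') direction, I would proceed as follows. First, fix the representative $G_0$ in the conformal class for which $X$ is a genuine Killing vector, together with the associated geodesic defining function $\rho$, so that $g=\rho^{-2}(d\rho^2+G(\rho))$ with $G(0)=G_0$. Next, extend $X$ to a $t$-independent vector field $Y$ near infinity which is tangent to the level sets of $\rho$ and is transported along the $\rho$-flow (equivalently, $\rho$-independent in the coordinates adapted to the slicing); this guarantees $\mathcal{L}_Y(d\rho^2)=0$, so that $h=\mathcal{L}_Y g$ carries no $d\rho$ component, which is precisely the gauge hypothesis required by the unique continuation theorem. I would then compute the decay of $h$ at $\rho=0$: since $X$ is Killing for $G_0$ and leaves $G_n$ invariant, and since every $G_i$ is a universal expression in $G_0$ and $G_n$, the flow of $Y$ preserves each Fefferman-Graham coefficient, so that $\mathcal{L}_Y G(\rho)$, and hence $h$, vanishes to infinite order at conformal infinity. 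With $h$ a solution of the linearized equations satisfying both the vanishing of its normal components and the requisite asymptotic smallness, the infinitesimal unique continuation statement of Remark~\ref{R12VI0.1} forces $h\equiv 0$ near infinity; thus $Y$ is the desired Killing extension.

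The main obstacle lies in the two preparatory steps, namely in arranging the extension $Y$ so that the gauge condition (no $d\rho$ component of $h$) and the infinite-order vanishing of $\mathcal{L}_Y G(\rho)$ hold \emph{simultaneously}. The delicate point is the compatibility between the choice of conformal factor that upgrades the conformal Killing vector $X$ to an isometry of $G_0$ and the geodesic defining function $\rho$: one must verify that these two choices can be made consistent, so that the slice-preserving, $t$-independent extension is well defined and its flow genuinely preserves each coefficient $G_i$. Once this asymptotic matching is secured and the no-$dx$-component gauge is in place, the appeal to the unique continuation result is routine.
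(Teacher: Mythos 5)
Your proposal is correct and follows essentially the same route as the paper: the paper likewise extends $X$ with vanishing normal component and $\rho$-independent, $t$-independent tangential part (so $h=\mathcal{L}_Y g$ is purely tangential), observes that $h$ lies in the kernel of the linearized Einstein operator and vanishes to infinite order because all Fefferman--Graham coefficients are determined by $(G_0,G_n)$, and concludes via the linearized version of Theorem~\ref{maintheorem1} (Remark~\ref{R12VI0.1}), with the ``only if'' direction obtained just as you describe (in the paper, via the KID equations of \cite[Appendix A]{ACD2} giving $\alpha\equiv 0$ and $\nu^\sharp$ constant in $\rho$). The compatibility issue you flag as the main obstacle is in fact automatic: fixing the conformal representative $G_0$ for which $X$ is Killing uniquely determines the associated geodesic defining function $\rho$, and this is precisely the normalization in which the expansion and the extension are constructed.
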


The proofs are an adaptation of our context of a related
analysis of Biquard in the Riemannian
setting~\cite{BiquardContinuation}. Here one needs to control
some new terms related to the stationary Einstein equations,
which did not occur in Biquard's problem.

\section{Definitions, notations and conventions}\label{sec:def}

Let $\overline{M}$ be a smooth, compact $n$-dimensional
manifold with boundary $\partial {M}$, thus
$M:=\overline{M}\backslash\partial{M}$ is a non-compact
manifold without boundary. As already mentioned, we consider
both the case where the boundary is at finite distance, and the
case where the boundary $\partial {M}$ is a \emph{conformal
boundary at infinity}: we say that a Riemannian manifold
$(M,g)$ is {\it conformally compact} if there exists on
$\overline{M}$ a smooth defining function $\rho$ for $\partial
M$ (that is $\rho\in C^\infty(\overline{M})$, $\rho>0$ on $M$,
$\rho=0$ on $\partial {M}$ and $d\rho$ nowhere vanishing on
$\partial M$) such that $\overline{g}:=\rho^{2}g$ is a
$C^{2,\alpha}(\overline {M})\cap C^{\infty}_0(M)$ Riemannian
metric on $\overline{M}$. We will denote by $\hat{g}$ the
metric induced on $\partial M$. Our definitions of function
spaces follow~\cite{Lee:fredholm}. Now if
$|d\rho|_{\overline{g}}=1$ on $\partial M$, it is well known
(see~\cite{Mazzeo:hodge} for instance) that $g$ has
asymptotically sectional curvature $-1$ near its boundary at
infinity, in that case we say that $(M,g)$ is {\it
asymptotically hyperbolic}.

We recall that the  Lichnerowicz Laplacian acting on a
symmetric two-tensor field is defined as~\cite[\S~1.143]{Besse}
$$
\Delta_Lh_{ij}=-\nabla^k\nabla_kh_{ij}+R_{ik}h^k{_j}+R_{jk}h^k{_j}-2R_{ikjl}h^{kl}\;.
$$

The operator $\Delta_L$ arises naturally when  linearizing the
Ricci operator. Let us define the divergence of a covariant two
tensor $h$:
$$
(\delta h)_j:=-\nabla^ih_{ij},
$$
with symmetrized adjoint
$$
(\delta^*\omega)_{ij}:=\frac12(\nabla_i\omega_j+\nabla_j\omega_i).
$$
We use the following sign convention for the divergence of a
one form:
$$
d^*\omega:=-\nabla^i\omega_i.
$$

%

We denote by ${\mathcal T}^q_p$ the set of rank $p$ covariant and rank
$q$ contravariant tensors. When $p=2$ and $q=0$, we denote by
${\mathcal S}_2$ the subset of symmetric tensors. We use the summation
convention, indices are lowered and raised with $g_{ij}$ and its
inverse $g^{ij}$.

\section{Proof in the AdS type setting}
 \label{sec:ads}

In space-time dimension $n+1$ we consider a Lorentzian metric
$g_-$ of the form
$$
g_{-} = -V^2(dt+\theta)^2 + g_+=\rho^{-2}(d\rho^2+\widetilde G(\rho)),
$$
where $g_+$ is Riemannian. Thus  $\widetilde G(\rho)$ is a family of
Lorentzian metrics, parameterized by $\rho$, of the form
$$
\rho^{-2}\widetilde G(\rho)=-V^2(dt+\theta)^2+\rho^{-2}\widetilde g(\rho) \;.
$$
Here $\widetilde g(\rho)$ can be thought of as a family of Riemannian
metrics defined on the $(n-1)$-dimensional level sets of
$\rho$, and note that
$$
 g_+=\rho^{-2}(d\rho^2+\widetilde g(\rho))
 \;.
$$
It is
convenient to introduce a coordinate $r=-\ln \rho$, and to
write $\rho^{-2}\widetilde G(\rho)=G(r)=G$ and
 $\rho^{-2}\widetilde g(\rho)=g(r)=g$.
Thus
$$
g_{-} =dr^2+G,
$$
where
$$
G=-V^2(dt+\theta)^2+g,
$$
so
$$
G^{-1}=(|\theta|^2-V^{-2})\partial_t^2-(\theta^\sharp\otimes\partial_t+\partial_t\otimes\theta^\sharp)+g^{-1}.
$$
The second fundamental forms of the level sets of $r$ are
$$
\II_{-}=\frac12  G'=-VV'(dt+\theta)^2-V^2\frac12[(dt+\theta)\otimes\theta'+\theta'\otimes(dt+\theta)]+\II,
$$
where primes denote partial $r$-derivatives and\footnote{The
reader is warned that our definition is the negative of that
in~\cite{BiquardContinuation}.}
$$\II=\frac12 g'.$$ Let also define the mean curvature
$$
H_-=\Tr_{G} \II_{-}=V^{-1}V'+H.
$$
Rescaling the metric to achieve a convenient normalization of
the constant $\Lambda=-n(n-1)/2$, the vacuum Einstein equations for a
metric satisfying \eq{gme1}-\eq{EE} read (see, e.g.,
\cite{Coquereaux:1988ne}) \bel{mainequation+tt}
 V(\nabla_{g_+}^*\partial V+n V)=\frac 1{4} |{}^+\newF |_g^2\;,
\ee
\bel{mainequation+ij}
     \Ric(g_+)+n g_+-V^{-1}\Hess_{g_+}V=\frac{1}{2V^{2}}{}^+\newF \circ {}^+\newF \;,
\ee
\bel{mainequation+ti} \delta_{g_+} (V {}^+\newF )=0\;,
 \ee
where
$$
 (^+\newF)_{ij}=-V^2(\partial_i \theta_j - \partial_j
\theta_i)\;,\;\;\;(^+\newF\circ
^+\newF)_{ij}=(^+\newF)_i{^k}(^+\newF)_{kj}\;.$$ As
$g_+=dr^2+g$, the non trivial Christoffel symbols of $g_+$ are
$$
^+\Gamma^r_{AB}=-\II_{AB}\;,\;\;^+\Gamma^A_{rB}=g^{AC}\II_{CB}\;,\;\;^+\Gamma^A_{BC}=\Gamma^A_{BC},
$$
in particular, the Hessian of $V$ is given by
$$
^+\Hess_{rr}V=V''\;,\;^+\Hess_{rA}V=d_AV'-g^{BC}\II_{AB}d_CV\;,
$$
$$
^+\Hess_{AB}V=(\Hess_{g})_{AB}V+V'\II_{AB}.
$$
Let us recall that $\theta$ is purely tangential:
$$
\theta=0 \, dr+\xi\;.
$$
\newcommand{\lambdap}{{}^+\lambda}%
\newcommand{\lambdam}{{}^_\lambda}%
Then $\lambdap =-V^2d_+\theta$ (here $d_+$ means the
differential on $M$), which gives
$$
 ^+\lambda_{rr}=0\;,\;\;^+\lambda_{rA}=-V^2\xi'_A
  \;, $$
$$^+\lambda_{AB}=-V^{2}(\partial_A\xi_B-\partial_B\xi_A)=
-V^2(d\xi)_{AB}=:\lambda_{AB}\;.
$$
So
$$
(\lambdap \circ \lambdap )_{rr}=V^{4}|\xi'|^2,
$$
$$
(\lambdap \circ \lambdap )_{rA}=-V^{2}(\xi'_C)g^{CD}(r)\lambda_{AD},
$$
$$
(\lambdap \circ \lambdap )_{AB}=
V^{4}(\xi'_A)(\xi'_B)+g^{CD}(r)\lambda_{AD}\lambda_{BC},
$$
$$
|\lambdap |^2=\Tr_{g}(\lambdap \circ \lambdap )=2V^{4}|\xi'|^2+|{\lambda}|^2,
$$
where ${\lambda}$ is the restriction of $\lambdap $ to the
level sets of $r$. Equation~\eq{mainequation+ij}  is equivalent
to
\begin{eqnarray}
 \label{mainequationAB}
 \lefteqn{
\Ric(g)-H\;\II-\II'+2\II\circ
\II+ng-V^{-1}\Hess_{g}V-V^{-1}V'\II
 }
 &&
\\
 \nonumber
 &&
  \phantom{xxxxxxxxxxxxxxxxxxxxx}  =\frac12
V^{2}(\xi')\otimes(\xi')+\frac12V^{-2}{\lambda}\circ
{\lambda}\;,
\end{eqnarray}
\bel{mainequationAr}
     -\delta_{g}\II-{d} H-V^{-1}[{d} V'-\II({\nabla}V,\cdot)]=\frac12\lambda(\cdot,(-\xi'))\;,
\ee \bel{mainequationrr}
-H'-|\II|^2+n-V^{-1}V''=\frac12V^{2}|\xi'|^2
 \;.
\ee
Equations~\eq{mainequationAr} and \eq{mainequationrr} can be
rewritten, respectively, as
\bel{mainequationAr2}
    \delta_{g}\II= -{d} H_--V^{-2}V'{d} V+V^{-1}\II({\nabla}V,\cdot)-\frac12\lambda(\cdot,-\xi')\;,
\ee
\bel{mainequationrr2}
H_-'=n-|\II|^2-(V^{-1}V')^2-\frac12V^{2}|\xi'|^2\;. \ee
The system \eq{mainequation+ti} is equivalent to
\bel{mainequationdivr} d^*(V^3\xi')=0\;, \ee
\bel{mainequationdivA} (V^3\xi')'+\delta
(V{\lambda})-HV^3\xi'-2V^2\II(\xi', \cdot)
 =0
 \;.
\ee
We want to show how Biquard's reduction of the Riemannian
vacuum Einstein equations to an elliptic system generalizes to
the problem at hand. From the linearization of the Ricci
curvature operator (see eg.~\cite{Besse}) we have
$$
 \Ric(g)'=\Delta_L (\II)-2\delta^*\delta \II-\delta^*dH
 \;,
$$
and so we obtain that  the $r$-derivative of equation
\eq{mainequationAB} reads
\begin{eqnarray}
 \label{Ricprime}
 \lefteqn{
 \Delta_L (\II)-\II''-2\delta^*\delta \II-\delta^*dH-V^{-1}\Hess V'
 }
 &&
\\
 &&
  +V^{-2}V'\Hess V+\mbox{ first order in } (\II,\xi',V')=0
 \;.
 \nonumber
\end{eqnarray}
Using \eq{mainequationAr2},  and the fact that
\begin{eqnarray*}
\delta^*dH_-&=&\delta^*dH+V^{-1}\Hess V'-V^{-2}V'\Hess V\\
 &&\hspace{1cm}+2V^{-3}V'dV\otimes dV-V^{-2}(dV\otimes dV'+dV'\otimes dV),
 \end{eqnarray*}
together with \eq{Ricprime}, one is led to an equation which is
elliptic for $\II$ if one disregards the fact that $H_-$ is
related to $\II$:
\bel{ellipticII} \nabla^*\nabla \II-\II''+ \mbox{ first order
in } (\II,\xi',V')=-\delta^*dH_-
 \;.
\ee
Next, the $r$-derivative of  \eq{mainequationdivA} reads
\bel{mainequationdivA'} V^3(\xi'''-\delta d\xi')+ \mbox{ first
order in } (\II,\xi',V')=0
 \;.
\ee
Inserting equality \eq{mainequationdivr} in the last equation
and dividing by $-V^3$ gives an elliptic equation for $\xi'$:
\bel{ellipticxi'}
 \nabla^*\nabla \xi'-(\xi')''+ \mbox{ first
 order in } (\II,\xi',V')=0
 \;.
\ee
Finally the $r$-derivative of  \eq{mainequation+tt} divided by
$V$ gives an  elliptic equation for $V'$:
\bel{ellipticV'}
 \nabla^*\nabla V'-(V')''+ \mbox{ first order in }
 (\II,\xi',V')=0
 \;.
\ee
Combining the three equations \eq{ellipticII}, \eq{ellipticxi'}
and \eq{ellipticV'}, and momentarily ignoring that $H_-$ is not
an independent field,  gives an  elliptic system for
$(V',\xi',\II)$  or, more geometrically, an elliptic system of
equations for
$$
\A:=VV'dt^2+\frac12V^2(\xi'\otimes dt+ dt\otimes \xi')+\II.
$$
Define now  the metric
$$
{\mathcal G}:=V^2dt^2+g.
$$
So  the ${\mathcal G}$-norm of $\A$ is:
$$
|\A|^2=|\II|^2+(V^{-1}V')^2+\frac12V^{2}|\xi'|^2,
$$
and the ${\mathcal G}$-trace of $\A$ is $H_-$. In particular,
\eq{mainequationrr2} becomes \bel{mainequationrr3}
H_-'=n-|\A|^2\;. \ee

Assume we have two stationary  Einstein metrics  $g_-$ and
$(g_0)_-$, and suppose that there exists a choice of conformal
factors at the boundary so that the metrics  coincide to order
$n$ at infinity in their respective coordinates $(\rho,x^A)$ as
above. Then they coincide  to infinite order.
 We wish to show
they are equal near infinity. We will put a subscript $0$ for
all quantities relative to $(g_0)_-$. First, we show that all
quantities relative to the difference between the metrics are
controlled by quantities relative to the difference of the
second fundamental form of the level set of $r$, and the same
is true for the difference between  the mean curvature.

As in~\cite{BiquardContinuation}, the   simplest control comes
from the fact that \bel{diffII} \II-\II_0=\frac12(g-g_0)'
 \;,
\ee
so from Equation (10) in~\cite{BiquardContinuation} for $s>2$,
\bel{contIIg} \int_{r_0}^\infty|\II-\II_0|^2_{g_0}e^{2sr}dr\geq
C^{-1}s^2\int_{r_0}^\infty|g-g_0|^2_{g_0}e^{2sr}dr
 \;.
\ee
Commuting \eqref{diffII} with derivatives, it is standard to
obtain
\begin{eqnarray}
 \label{contIIgb}
 \lefteqn{
  \int_{r_0}^\infty\sum_{i=0}^k|\nabla_0^{(i)}(\II-\II_0)|^2_{g_0}e^{2sr}dr
 }
 &&
\\
 &&
  \geq
 C^{-1}s^2\int_{r_0}^\infty\sum_{i=0}^k|\nabla_0^{(i)}(g-g_0)|^2_{g_0}e^{2sr}dr
  \;.
 \nonumber
\end{eqnarray}
\begin{remark}\label{remV}
Replacing  \eq{diffII} by  $(V^2)'-(V_0^2)'= [(V^2)-(V_0^2)]'$,
we will obtain the same kind of integral inequality comparing
$(V^2)'-(V_0^2)'$ with $(V^2)-(V_0^2)$.
\end{remark}

We also have (see \eq{mainequationrr3})
\bel{HmoinsH0} [H_--(H_0)_-]'=|\A_0|^2_{{\mathcal
G}_0}-|\A|^2_{\mathcal G}\;. \ee
Now recall that ${\mathcal G}$ and all of its derivatives are
bounded relatively to ${\mathcal G}_0$, so (see
Lemma~\ref{estiA}, Appendix~\ref{A12XI0.1} for details)
$$
 \left||\A_0|^2_{{\mathcal G}_0}-|\A|^2_{\mathcal G}
  \right|
 \leq C (|\A_0-\A|_{{\mathcal G}_0}+|{\mathcal G}_0-{\mathcal G}|_{{\mathcal G}_0}).
$$
Combining this with equation \eq{contIIgb}, Remark~\ref{remV}
and (derivatives of) equation \eq{HmoinsH0}, shows that
%
%
\begin{eqnarray}
 \label{contAg}
  \lefteqn{
 \int_{r_0}^\infty\sum_{i=0}^k|D_0^{(i)}(\A-\A_0)|^2_{\mathcal
 G^+_0}e^{2sr}dr
 }
 &&
\\
 &&
    \geq
C^{-1}s^2\int_{r_0}^\infty\sum_{i=0}^k|D_0^{(i)}[H_--(H_0)_-]|^2_{\mathcal
G^+_0}e^{2sr}dr
 \;,
 \nonumber
\end{eqnarray}
where $D_0$ is the covariant derivative
relative to $dr^2+\mathcal G_0$, $\A$ is identified with
$0dr^2+\A$ and the same for $\A_0$.

The rest of the proof is a straightforward  adaptation of
Section 3 of~\cite{BiquardContinuation} where $\II$ there
correspond to $\A$ here, $g$ there correspond to $dr^2+\mathcal
G$ here, etc. The argument there shows that $\A=\A_0$ so
$g_-=(g_0)_-$.  This concludes the proof of Theorem
\ref{maintheorem1}.

\section{Boundary at finite distance}
 \label{sec:findist}

We are interested now in stationary Lorentzian metrics (see
\eq{gme1}), solutions of
\bel{EElambda} \Ric(g_{-})=\lambda
g_{-}\;,\ee
where $\lambda$ is a constant, and where the $\{t=0\}$ slice is
a compact manifold $M$ with smooth boundary $\partial M$. All
quantities ($V$, $\theta$, $g_+$,...) are then assumed to be
smooth up to the boundary. If two metrics $g_-$ and $(g_0)_-$
as above coincide on the boundary together with their second
fundamental form, then (up to a diffeomorphism) they coincide
to order one and then to infinite order. We will show they are
in fact equal near the boundary. The proof proceeds exactly as
in Section \ref{sec:ads}, where we replace the parameter $r$ by
$x$, the distance to the boundary. Henceforth we write
$$
g_{-} = -V^2(dt+\theta)^2 + g_+=dx^2+G(x),
$$
thus
$$
G(x)=-V^2(dt+\theta)^2+g(x),
$$
where $g_+=dx^2+g(x)=dx^2+g$.  So we can write
$$
g_{-} =dx^2+G,
$$
where
$$
G=-V^2(dt+\theta)^2+g \;.
$$
Then, for  example,  \eq{contIIgb} is replaced by
\bel{contIIgx}
\int_{0}^{x_0}|\nabla_0^{(2)}(\II-\II_0)|^2_{g_0}x^{-2s}dx \geq
 C^{-1}s^2\int_{0}^{x_0}\sum_{i=0}^2s^{4-2i}x^{2i-4}|
 \nabla_0^{(i)}(g-g_0)|^2_{g_0}x^{-2s}dx
  \;,
\ee
with similar other obvious changes in the remainder of the
argument. This then gives the result for a boundary at finite
distance.

\section{KID extensions}
\subsection{The Riemannian case}
 \label{sub10VI0.1}

%
We first give the result in the Riemannian setting, which does
not seem to have  appeared in the literature before. Let
$(M,g_+)$ be smooth $n$-dimensional Riemannian manifold with
smooth compact boundary $\partial M$. Take  $x$ to be the
geodesic distance from the boundary. Near $\partial M$ the
metric takes the form
\bel{10VI0.1}
 g_+=dx^2+g,
\ee
where $g=g(x)=g_{AB}dx^Adx^B$ is a family of metrics,
parameterized by $x$, on $\partial M$. Let us define
$\II=\frac12g'$, $H=Tr \II$ and $(\delta h)_j=-\nabla^ih_{ij}$
then, the non trivial Christoffel symbols are
$$
^+\Gamma^{x}_{AB}=-\II_{AB},\;\;^+\Gamma^{C}_{Ax}=\II^C_A,\;\;
^+\Gamma^C_{AB}=\Gamma(g)^C_{AB},
$$
in particular one has (see the Gauss and Codazzi equations in
\cite{Besse} for instance) \bel{ricpAB}
\Ric(g_+)_{AB}=(\Ric(g)-H\;\II-\II'+2\II\circ \II)_{AB}
 \;,
 \ee
\bel{ricpxA} \Ric(g_+)_{xA}=-\delta_{g}\II-{d} H
 \;,
 \ee
\bel{ricpxx} \Ric(g_+)_{xx}=-H'-|\II|^2
 \;.
\ee
Given $\omega=\alpha dx+\nu$, a one form decomposed in normal
and tangential parts, let $h=(\mathcal L_{\omega^\sharp}g_+)$,
then
\bel{hxx}
h_{xx}=2\alpha', \ee \bel{hxA}
h_{xA}=(\nu'_A-2\II^C_A\nu_C+d_A\alpha)=g_{AB}(\nu^B)'+d_A\alpha,
\ee \bel{hAB} h_{AB}=(\mathcal
L_{\nu^\sharp}g+2\alpha\II)_{AB}. \ee
We then see that if $\omega^\sharp$ is a Killing vector field of $M$, then
$\alpha'=0$.

We are interested in the Riemannian equivalent of \emph{Killing
initial data}, which we continue to call KIDs. By definition,
these are couples $(\alpha,\nu)$ on $\partial M$ that satisfy
equations which would be satisfied by normal and tangential
parts of a Killing form $\omega$ on $M$.

We first find necessary conditions on $(\alpha,\nu)$ on
$\partial M$ assuming $h\equiv 0$. Using $h_{AB}(0)=h'_
{AB}(0)=0$ (also using \eq{hxA} to replace $(\nu^\sharp)'$ and
\eq{ricpAB}, assuming $g_+$ is Einstein, to replace $\II'$ when
calculating $h'_{AB}(0)$),  we obtain the Riemannian KID
equations:
\bel{kids0} \mathcal
L_{\nu^\sharp}g+2\alpha\II=0, \ee
\bel{kids1} 2\mathcal
L_{\nu^\sharp}\II-\mathcal L_{\nabla
\alpha}g+2\alpha[\Ric(g)-\lambda-H\II+2\II\circ\II]=0. \ee

Reciprocally, assuming \eq{kids0} and \eq{kids1} on $\partial
M$, one can define $\alpha(x)=\alpha(0)$ and $\nu(x)$ such that
$h_{xA}=0$ in \eq{hxA}, that is $\nu^\sharp$ solves
$(\nu^\sharp)'=-\grad_g\alpha$. Then $h$ is purely tangential
(i.e., $h$ has no $dx$ components) and $h(0)=h'(0)=0$. Now as
$g_+$ is Einstein:
$$
\mbox{Ein}(g_+):=\Ric(g_+)-\lambda g_+=0,
$$
then  $D\mbox{Ein}(g_+)h=0$; this can be seen by a direct
calculation, or by   considering  the Einstein metric
$g_+^t=\Phi_t^*g_+$, where $\Phi_t$ is the local flow of
$\omega^\sharp$. From~\cite[Theorem 2]{BiquardContinuation} we
can conclude that $h\equiv 0$ near $\partial M$ so
$\omega^\sharp$ is a Killing near $\partial M$. We have thus
proved the following Riemannian equivalent of
Theorem~\ref{maintheorem4}:

\begin{theorem}
 \label{maintheorem5}
Let $n=\dim M\ge 2$,  and let $g_+$ by a  $C^\infty$   Einstein
metric on $M$. Then any  KID on $\partial M$ arises from the
restriction to $\partial M$ of a Killing vector defined  on a
neighborhood of $\partial M$.
\end{theorem}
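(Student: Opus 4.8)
The plan is to build, out of the KID $(\alpha,\nu)$, a distinguished vector field $\omega^\sharp$ on a collar of $\partial M$ whose deformation tensor $h:=\mathcal L_{\omega^\sharp}g_+$ (i) has no $dx$ components, (ii) vanishes to first order at $\partial M$, and (iii) lies in the kernel of the linearized Einstein operator; the infinitesimal unique continuation theorem \cite[Theorem~2]{BiquardContinuation} then forces $h\equiv0$, which says exactly that $\omega^\sharp$ is Killing near $\partial M$ and restricts to $(\alpha,\nu)$. To construct $\omega^\sharp=\alpha\,\partial_x+\nu^\sharp$ in the Gauss coordinates \eqref{10VI0.1}, I would propagate the boundary data along the normal geodesics: keep $\alpha$ constant, $\alpha(x)\equiv\alpha(0)$, and define $\nu(x)$ as the unique smooth solution of the transport equation $(\nu^\sharp)'=-\grad_g\alpha$ with initial value $\nu$ on $\partial M$. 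Since the right-hand side is smooth on the collar, this determines $\omega^\sharp$ on a neighborhood of $\partial M$.

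With these choices $h$ is purely tangential: \eqref{hxx} gives $h_{xx}=2\alpha'=0$, while \eqref{hxA} gives $h_{xA}=g_{AB}(\nu^B)'+d_A\alpha=0$ by the transport equation, so only $h_{AB}=(\mathcal L_{\nu^\sharp}g+2\alpha\II)_{AB}$ survives. Restricting to $\partial M$ and invoking the first KID equation \eqref{kids0} yields $h(0)=0$. For the first derivative I would differentiate \eqref{hAB}, use $\alpha'=0$ together with the transport equation, and eliminate $\II'$ by means of the Einstein relation \eqref{ricpAB} (which gives $\II'=\Ric(g)-H\II+2\II\circ\II-\lambda g$); what remains is precisely the left side of the second KID equation \eqref{kids1}, so $h'(0)=0$. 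This is the same computation that produced \eqref{kids0}--\eqref{kids1} as necessary conditions, now run in reverse.

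Finally I would check that $h$ solves the linearized Einstein equations: as $g_+$ is Einstein, $\mbox{Ein}(g_+)=\Ric(g_+)-\lambda g_+=0$, and each pullback $\Phi_t^*g_+$ by the local flow $\Phi_t$ of $\omega^\sharp$ is again Einstein with the same constant, so $D\,\mbox{Ein}(g_+)\,h=\frac{d}{dt}\big|_{t=0}\mbox{Ein}(\Phi_t^*g_+)=0$. Thus $h$ is a purely tangential solution of the linearized equations vanishing to first order at $\partial M$, and \cite[Theorem~2]{BiquardContinuation} applies to give $h\equiv0$ near the boundary, completing the argument.

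The genuinely analytic content is the cited unique continuation theorem, which I take as given. Within the present argument the only point demanding care is the reverse reading in the middle step: because $h'_{AB}(0)$ involves $\II'(0)$, which is not part of the prescribed data, one must first trade $\II'$ for curvature and lower-order terms via the Einstein equation before \eqref{kids1} can be recognized as the vanishing of $h'(0)$; one should also confirm that ``purely tangential with $h(0)=h'(0)=0$'' matches exactly the hypotheses under which \cite[Theorem~2]{BiquardContinuation} is stated.
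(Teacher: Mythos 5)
Your proposal reproduces the paper's own proof essentially step for step: the same transport construction $\alpha(x)=\alpha(0)$, $(\nu^\sharp)'=-\grad_g\alpha$ making $h$ purely tangential, the same use of \eqref{kids0}--\eqref{kids1} (with $\II'$ eliminated via the Einstein equation \eqref{ricpAB}) to get $h(0)=h'(0)=0$, the same flow argument for $D\operatorname{Ein}(g_+)h=0$, and the same appeal to \cite[Theorem~2]{BiquardContinuation}. The caveat you flag at the end --- that $h(0)=h'(0)=0$ must be upgraded (via the linearized Einstein equations) to the decay hypothesis of the unique continuation theorem --- is likewise left implicit in the paper, so your argument is at the same level of detail and is correct.
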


\begin{Remark}
 \label{R16VI0.1}
The extensions above are not necessarily global. For example,
consider a sufficiently small ball  $B(p,\epsilon)$ in a flat
torus $\T^n$. If $
\partial B(p,\epsilon)$ is viewed as the boundary of the ball, then every
KID of the boundary extends to a globally defined Killing
vector in the interior. However, if  $
\partial B(p,\epsilon)$  is viewed as the
boundary of $\T^n\setminus B(p,\epsilon)$, then only those KIDs
which correspond to translations of the torus extend globally.
\end{Remark}

\subsection{The conformally compact Riemannian case}

We now treat the case where $(M,g_+)$ is conformally compact,
Einstein,  with a smooth conformal boundary at infinity. Let
$\rho$ be a defining function such that $|d\rho|_{{\overline
g}_+}=1$ near $\partial M$. We have
$$
\overline g_+=d\rho^2+\overline{g},
$$
where $\overline g=\overline g(\rho)=\overline g_{AB}dx^Adx^B$ is a family of metrics on $\partial M$.

Let us define $r=-\ln \rho$. Near $\partial M$, the metric
$g_+$ takes the form
$$
g_+=dr^2+g,
$$
where $g=g(r)=g_{AB}dx^Adx^B$ is a family of metrics on
$\partial M$. We then recover the form given for finite
distance boundary with $x$ replaced by $r$.

To establish the ``only if'' part of
Theorem~\ref{maintheorem5}, consider a Killing field
$\omega^\sharp$ for $g_+$. Denoting by $\nu^\sharp(0)$ the part
of $\omega^\sharp(0)$ tangent to $\partial M$, we further
suppose that $\nu^\sharp(0)$ is a
 Killing  vector field for $\overline
g(0)$. In the notation of ~\cite[Appendix A]{ACD2}, we then see
by the equations there that $\alpha\equiv 0$ and
$\nu^\sharp(x)=\nu^\sharp(0)$. We thus find the necessary
condition that
\bel{kidscc1} \mathcal
L_{\nu^\sharp(0)}\overline g_{(n-1)}=0, \ee
where $\overline g_{(n-1)}$ is the undetermined term in the
Fefferman-Graham expansions~\cite{FG,GrahamHirachi}.

Reciprocally, if $X\equiv \nu^\sharp(0)$ is a  Killing field on
$\partial M$ and condition \eq{kidscc1} holds, we set
$\alpha=0$ and $\nu^\sharp(x)=\nu^\sharp(0)$. The tensor
$h:=\mathcal L_{\omega^\sharp}g_+$ is then purely tangential
and $h=o(\rho^n)$. Also, as before, $h$ is in the kernel of
$D\mbox{Ein}(g_+)$. We conclude by~\cite{BiquardContinuation}
that $h\equiv 0$ near $\partial M$ so $\omega^\sharp$ is a
Killing for $g_+$.
%

\subsection{The Lorentzian case}
 \label{sub8VI0.1}
In the stationary Lorentzian setting, we start by introducing
Gauss coordinates for the space-time metric near $\R\times
\partial M$. All  calculations of Section~\ref{sub10VI0.1} remain valid,
except that now the metric $g$ of \eqref{10VI0.1} is Lorentzian
instead of Riemannian. This does not affect the argument, since
the time-derivatives of all fields involved drop out, and so
the Biquard method leads again to elliptic equations in the
space variables. From there the proof proceeds exactly in the
same fashion, using our unique continuation
Theorem~\ref{maintheorlinear} in place of the Biquard one. This
readily proves Theorem~\ref{maintheorem4}.

The proof of Theorem~\ref{maintheorem5a} proceeds similarly,
using the linearized equivalent of Theorem~\ref{maintheorem1}
(compare Remark~\ref{R12VI0.1}).
\qed

\medskip

\section{Concluding remarks}
 \label{s18VI0.1}

Some readers might be tempted to think that our
finite-boundary-unique-continuation results are a trivial
consequence of the usual analyticity results for stationary
solutions of vacuum Einstein equations~\cite{MzH}, for if the
metric can be extended across the boundary in the stationary
class, then it is analytic at the boundary, and unique
continuation is straightforward. The following example shows
that this is not the case: Recall that Weyl metrics, which are
static axi-symmetric solutions of the vacuum Einstein
equations, are uniquely described by axisymmetric solutions $u$
of the flat-space Laplace equation. So choose some
axi-symmetric simply-connected domain $\Omega$ with analytic
boundary in $\R^3$, and let $u$ be a harmonic function on
$\Omega$ with \emph{non-analytic} boundary values on $\partial
\Omega$. We can choose $\Omega$ and $u$ so that the Killing
vector is uniformly timelike on $\Omega$. Then $u$ cannot be
extended to a harmonic function defined on a set larger than
$\Omega$, otherwise its trace on $\partial \Omega$ would have
been analytic. Thus the corresponding metric $g$ cannot be
extended in the Weyl class.

Now, if $g$ could be extended in the stationary class, then the
associated stationary Killing vector would be timelike in a
neighborhood of $\partial \Omega$, and thus the metric would be
analytic across $\Omega$. By analyticity the extension would be
static and axi-symmetric, and therefore in the Weyl class. But
then $u$ would be analytic on $\partial \Omega$. So  $g$ cannot
be extended across $\partial \Omega$ in the stationary vacuum
class, and an extension within the vacuum class, if any exist,
cannot be analytic at $\partial \Omega$. In particular the
metrics in this example admit no stationary vacuum extensions
away from $\Omega$.  Nevertheless, by our results above, any
KID on the boundary arises from a space-time Killing vector
defined on a one-sided  neighborhood of $\R\times\partial
\Omega$.

\appendix
\section{}
 \label{A12XI0.1}
\begin{lemma}\label{estiA}
$$
 \left||\A_0|^2_{{\mathcal G}_0}-|\A|^2_{\mathcal G}
  \right|
 \leq C (|\A_0-\A|_{{\mathcal G}_0}+|{\mathcal G}_0-{\mathcal G}|_{{\mathcal G}_0}).
$$
\end{lemma}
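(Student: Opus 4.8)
The plan is to reduce the difference of the two squared norms to the two quantities appearing on the right-hand side by inserting an intermediate term, and then to estimate each piece by Cauchy--Schwarz together with the uniform equivalence of $\mathcal G$ and $\mathcal G_0$ recorded just before the statement. Concretely, I would write
$$
|\A_0|^2_{\mathcal G_0}-|\A|^2_{\mathcal G}
 =\big(|\A_0|^2_{\mathcal G_0}-|\A_0|^2_{\mathcal G}\big)
  +\big(|\A_0|^2_{\mathcal G}-|\A|^2_{\mathcal G}\big),
$$
so that in the second bracket the metric is held fixed while the tensor varies, and in the first the tensor is held fixed while the metric varies.

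For the second bracket both terms are measured with the single metric $\mathcal G$, so it factors as the $\mathcal G$-inner product
$$
|\A_0|^2_{\mathcal G}-|\A|^2_{\mathcal G}
 =\langle \A_0-\A,\A_0+\A\rangle_{\mathcal G}.
$$
Cauchy--Schwarz bounds this by $|\A_0-\A|_{\mathcal G}\,(|\A_0|_{\mathcal G}+|\A|_{\mathcal G})$. Since $\mathcal G$ and $\mathcal G_0$ are uniformly equivalent I may replace $|\A_0-\A|_{\mathcal G}$ by $|\A_0-\A|_{\mathcal G_0}$ at the cost of a constant, while the factor $|\A_0|_{\mathcal G}+|\A|_{\mathcal G}$ is uniformly bounded because $\A$, $\A_0$ and $\mathcal G$ are bounded relatively to $\mathcal G_0$. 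This yields a bound of the form $C\,|\A_0-\A|_{\mathcal G_0}$.

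For the first bracket the tensor $\A_0$ is fixed and only the metric changes; in coordinates $|\A_0|^2_{\mathcal G}=\mathcal G^{ik}\mathcal G^{jl}(\A_0)_{ij}(\A_0)_{kl}$, so the difference is a sum of terms each carrying one factor $\mathcal G^{ab}-\mathcal G_0^{ab}$ and otherwise bounded factors. The algebraic identity $\mathcal G^{-1}-\mathcal G_0^{-1}=-\mathcal G^{-1}(\mathcal G-\mathcal G_0)\mathcal G_0^{-1}$ shows that $|\mathcal G^{-1}-\mathcal G_0^{-1}|_{\mathcal G_0}\le C\,|\mathcal G-\mathcal G_0|_{\mathcal G_0}$, again using the uniform bounds on the inverse metrics and on $\A_0$. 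Hence the first bracket is controlled by $C\,|\mathcal G-\mathcal G_0|_{\mathcal G_0}$, and adding the two estimates gives the claim.

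The computation is entirely algebraic once the two inputs are in place, so there is no analytic obstacle; the only thing that genuinely must hold is that $\A$, $\A_0$, $\mathcal G$, $\mathcal G_0$ and the inverse metrics $\mathcal G^{-1}$, $\mathcal G_0^{-1}$ are all bounded with respect to $\mathcal G_0$ on the relevant collar. This is exactly the observation, recalled before the statement, that $\mathcal G$ and all its derivatives are bounded relatively to $\mathcal G_0$. The mild care required is simply to keep track of which norm each factor is measured in, so that every estimate is ultimately re-expressed in the $\mathcal G_0$-norm appearing on the right-hand side.
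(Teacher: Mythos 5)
Your proof is correct, but it is organized differently from the paper's. The paper argues entirely in components: it writes out $|\A|^2_{\mathcal G}=|\II|^2+(V^{-1}V')^2+\frac12V^2|\xi'|^2$ and its analogue for $\A_0$, and then estimates each of the three differences by hand, using the explicit asymptotics $V=O(e^r)$, $g=O(e^{2r})$, $V'\sim V\sim V_0\sim V_0'$, $\II\sim g$, with the $\xi'$-term split into three pieces $a+b+c$ bounded separately against the componentwise expressions for $|\A-\A_0|_{\mathcal G_0}$ and $|\mathcal G-\mathcal G_0|_{\mathcal G_0}$. You instead telescope abstractly, $|\A_0|^2_{\mathcal G_0}-|\A|^2_{\mathcal G}=(|\A_0|^2_{\mathcal G_0}-|\A_0|^2_{\mathcal G})+(|\A_0|^2_{\mathcal G}-|\A|^2_{\mathcal G})$, handling the second bracket by Cauchy--Schwarz and the first by the identity $\mathcal G^{-1}-\mathcal G_0^{-1}=-\mathcal G^{-1}(\mathcal G-\mathcal G_0)\mathcal G_0^{-1}$. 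This is cleaner and strictly more general: it proves the lemma for any symmetric $2$-tensors $\A,\A_0$ bounded relative to $\mathcal G_0$ and any pair of uniformly equivalent metrics, with the specific structure of $\A$ and $\mathcal G$ entering only through the verification of those two hypotheses. What the paper's computation buys in exchange is precisely that verification: the decay rates listed at the start of its proof are what establish $\II\sim g$, $\A\sim\mathcal G$, i.e.\ the uniform bounds your argument takes as inputs, and they also identify which concrete component differences ($|\II-\II_0|_0$, $V_0^{-2}|VV'-V_0V_0'|$, etc.) make up the right-hand side norms. Two small points to make explicit in your version: you need equivalence of the metrics in \emph{both} directions (the paper's preamble only states that $\mathcal G$ is bounded relative to $\mathcal G_0$, but the symmetric bound holds here since the metrics coincide to infinite order at the boundary), and the uniform bound $|\A_0|_{\mathcal G_0}+|\A|_{\mathcal G}\le C$ should be justified from the same asymptotics rather than merely asserted. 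With those inputs granted, every step of your argument is sound.
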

\begin{proof}
We first recall that, in view of our assumptions on the metric,
we clearly have, for coordinates smooth up to the boundary:
$$ V=O(e^r),\; g=O(e^{2r}),\;
V'-V=O(e^{-r}), \; \zeta'=O(e^{-2r}), \; \II-g=O(1),
$$
in particular, near the boundary,  one has
$$
\II\sim g,\;\A\sim\mathcal G,
$$
and the same is true for the $V_0$, $g_0$, etc.  Let us compute
$$
|\A|_{{\mathcal G}}^2=|\II|^2+(V^{-1}V')^2+\frac12V^{2}|\xi'|^2,
$$
$$
|\A_0|_{{\mathcal G}_0}^2=|\II_0|_0^2+(V_0^{-1}V_0')^2+\frac12V_0^{2}|\xi'_0|_0^2,
$$
$$
|\A-\A_0|_{{\mathcal G}_0}^2=|\II-\II_0|_0^2+V_0^{-4}(VV'-V_0V_0')^2+\frac12V_0^{-2}|V^{2}\xi'-V_0^2\xi_0'|_0^2,
$$
$$
|{\mathcal G}_0-{\mathcal G}|^2_{{\mathcal G}_0}=V_0^{-4}(V^2-V_0^2)^2+|g-g_0|^2_0.
$$
We will show that each term appearing in  $||\A|_{{\mathcal
G}}^2-|\A_0|_{{\mathcal G}_0}^2|$  can be controlled by those
in $|\A-\A_0|_{{\mathcal G}_0}$ and $|{\mathcal G}_0-{\mathcal
G}|_{{\mathcal G}_0}$.

First, formally  (recall $\II\sim g \sim g_0\sim \II_0$) we have
$$
|\II|^2-|\II_0|_0^2=g^{-2}\II^2-g_0^{-2}\II_0^2=(g^{-2}-g_0^{-2})\II^2+g_0^{-2}(\II^2-\II_0^2)\leq C(|g-g_0|_0+|\II-\II_0|_0).
$$
We also have (recall $V'\sim V\sim V_0\sim V_0'$)
\begin{eqnarray*}
|(V^{-1}V')^2-(V_0^{-1}V_0')^2|&\leq &C_1|VV'-V_0V'_0|\\
&\leq&C_1V_0^{-2}|VV'-V_0V_0'+V^{-1}V'(V_0^2-V^2)|\\
&\leq& C(V_0^{-2}|VV'-V_0V_0'|+V_0^{-2}|V_0^2-V^2|).
\end{eqnarray*}
Finally, we can write
$$
V^{2} |\xi'|^2-V_0^{2} |\xi'_0|_0^2=a+b+c,
$$
where
\begin{eqnarray*}
|a|&=&V_0^{-2}||V^2\xi'|_0^2- |V_0^2\xi'_0|_0^2|
\leq
V_0^{-2}|V^2\xi'+ V_0^2\xi'_0|_0|V^2\xi'- V_0^2\xi'_0|_0\\
&\leq& C V_0^{-1}|V^2\xi'- V_0^2\xi'_0|_0
\;,
\\
|b|&=&V_0^{-2}V^4||\xi'|^2- |\xi'|_0^2|
 \leq C_1V^2|\xi'|^2|g-g_0|_0\\
&\leq& C|g-g_0|_0
 \;,\\
|c|&=&V_0^{-2} |V_0^2-V^2|V^2|\xi'|^2
\leq C V_0^{-2} |V_0^2-V^2|
 \;.
\end{eqnarray*}
\end{proof}

\bigskip

\noindent{\sc Acknowledgements}
PTC was supported in part by the Polish Ministry of Science and
Higher Education grant Nr N N201 372736. Useful discussions
with Luc Nguyen are acknowledged.

%
%
%

\begin{thebibliography}{10}

\bibitem{AIK}
S.~Alexakis, A.D. Ionescu, and S.~Klainerman, \emph{{Hawking's local rigidity
  theorem without analyticity}},  (2009), arXiv:0902.1173.

\bibitem{AIK2}
\bysame, \emph{{Uniqueness of smooth stationary black holes in vacuum: small
  perturbations of the Kerr spaces}},  (2009), arXiv:0904.0982.

\bibitem{AndersonAdvMath2003}
M.T. Anderson, \emph{Boundary regularity, uniqueness and non-uniqueness for
  {AH} {E}instein metrics on 4-manifolds}, Adv.\ Math. \textbf{179} (2003),
  205--249. \MR{MR2010802 (2004m:53078)}

\bibitem{ACD2}
M.T. Anderson, P.T. Chru\'{s}ciel, and E.~Delay, \emph{Non-trivial, static,
  geodesically complete space-times with a negative cosmological constant.
  {II}. {$n\geq5$}}, AdS/CFT correspondence: Einstein metrics and their
  conformal boundaries, IRMA Lect. Math. Theor. Phys., vol.~8, Eur. Math. Soc.,
  Z\"urich, 2005, arXiv:gr-qc/0401081, pp.~165--204.

\bibitem{AndersonHerzlichErratum}
M.T. Anderson and M.~Herzlich, \emph{Erratum to ``{U}nique continuation results
  for {R}icci curvature and applications'' {[{Jour}.\ {G}eom.\ {P}hys.\ 58
  (2008) 179--207]}}, Jour.\ Geom. Phys. \textbf{60} (2010), no.~6-8,
  1062--1067. \MR{2647302}

\bibitem{ChBeigKIDs}
R.~Beig and P.T. Chru\'{s}ciel, \emph{Killing {I}nitial {D}ata}, Class.\
  Quantum. Grav. \textbf{14} (1997), A83--A92, A special issue in honour of
  Andrzej Trautman on the occasion of his 64th Birthday, J.Tafel, editor.
  \MR{MR1691888 (2000c:83011)}

\bibitem{Besse}
A.L. Besse, \emph{{Einstein manifolds}}, Ergebnisse der Mathematik und ihrer
  Grenzgebiete. 3. Folge, vol.~10, Springer Verlag, Berlin, New York,
  Heidelberg, 1987.


\bibitem{BiquardContinuation}
O.~Biquard, \emph{Continuation unique \`a partir de l'infini conforme pour les m\'etriques d'Einstein}, Math. Res. Lett. \textbf{15}(6) (2008), 1091-1099.

\bibitem{ChDelayStationary}
P.T. Chru\'{s}ciel and E.~Delay, \emph{Non-singular, vacuum, stationary
  space-times with a negative cosmological constant}, Ann. Henri Poincar\'e
  \textbf{8} (2007).

\bibitem{Coquereaux:1988ne}
R.~Coquereaux and A.~Jadczyk, \emph{{Riemannian geometry, fiber bundles,
  Kaluza-Klein} theories and all that}, World Sci.\ Lect.\ Notes Phys.,
  vol.~16, World Scientific Publishing Co., Singapore, 1988. \MR{MR940468
  (89e:53108)}

\bibitem{FG}
C.~Fefferman and C.R. Graham, \emph{{Conformal invariants}}, {\'Elie Cartan et
  les math\'ematiques d'aujourd'hui, The mathematical heritage of \'Elie
  Cartan, S\'emin. Lyon 1984, Ast\'erisque, No.Hors S\'er. 1985, 95-116}.

\bibitem{GrahamHirachi}
C.R. Graham and K.~Hirachi, \emph{The ambient obstruction tensor and
  {$Q$}-curvature}, AdS/CFT correspondence: Einstein metrics and their
  conformal boundaries, IRMA Lect. Math. Theor. Phys., vol.~8, Eur. Math. Soc.,
  Z\"urich, 2005, pp.~59--71.

\bibitem{IonescuKlainerman1}
A.D. Ionescu and S.~Klainerman, \emph{{On the uniqueness of smooth, stationary
  black holes in vacuum}}, Invent. Math. \textbf{175} (2009), 35--102,
  arXiv:0711.0040 [gr-qc]. \MR{MR2461426 (2009j:83053)}

\bibitem{Lee:fredholm}
J.M. Lee, \emph{Fredholm operators and {E}instein metrics on conformally
  compact manifolds}, Mem. Amer. Math. Soc. \textbf{183} (2006), vi+83,
  arXiv:math.DG/0105046. \MR{MR2252687}

\bibitem{Mazzeo:hodge}
R.~Mazzeo, \emph{The {H}odge cohomology of a conformally compact metric},
  Jour.\ Diff.\ Geom. \textbf{28} (1988), 309--339. \MR{MR961517 (89i:58005)}

\bibitem{MzH}
H.~M{\"u}ller~zum Hagen, \emph{On the analyticity of stationary vacuum
  solutions of {E}instein's equation}, Proc.\ Cambridge Philos.\ Soc.
  \textbf{68} (1970), 199--201. \MR{41 \#5017}

\bibitem{WangConformal}
X.~Wang, \emph{On conformally compact {E}instein manifolds}, Math.\ Res.\ Lett.
  \textbf{8} (2001), 671--688. \MR{1 879 811}

\end{thebibliography}

\def\polhk#1{\setbox0=\hbox{#1}{\ooalign{\hidewidth
  \lower1.5ex\hbox{`}\hidewidth\crcr\unhbox0}}}
  \def\polhk#1{\setbox0=\hbox{#1}{\ooalign{\hidewidth
  \lower1.5ex\hbox{`}\hidewidth\crcr\unhbox0}}} \def\cprime{$'$}
  \def\cprime{$'$} \def\cprime{$'$} \def\cprime{$'$}
\providecommand{\bysame}{\leavevmode\hbox to3em{\hrulefill}\thinspace}
\providecommand{\MR}{\relax\ifhmode\unskip\space\fi MR }
\providecommand{\MRhref}[2]{%
  \href{http://www.ams.org/mathscinet-getitem?mr=#1}{#2}
}
\providecommand{\href}[2]{#2}

\end{document}